\theoremstyle{plain}
\newtheorem{theorem}{Theorem}[section]
\newtheorem{corollary}[theorem]{Corollary}
\newtheorem{claim}[theorem]{Claim}
\theoremstyle{definition}
\theoremstyle{remark}
\numberwithin{equation}{section}
\numberwithin{figure}{section}
\numberwithin{table}{section}
\newcommand{\T}{\operatorname{T}}
\newcommand{\E}{\operatorname{E}}
\newcommand{\Od}{\operatorname{O}}
\newcommand{\U}{\operatorname{U}}
\newcommand{\D}{\operatorname{D}}
\title{A note on a 2-enumeration of antisymmetric monotone triangles}
\author{Tri Lai\footnote{This research was supported in part by the Institute for Mathematics and its Applications with funds provided by the National Science Foundation (grant no. DMS-0931945).}\\
\small Institute for Mathematics and its Applications\\[-0.8ex]
\small University of Minnesota\\[-0.8ex]
\small Minneapolis, MN 55455\\
\small Email: tmlai@ima.umn.edu\\
\small Website: \url{http://www.ima.umn.edu/~tmlai/}
}
\date{\small Mathematics Subject Classifications: 05A15, 05C70, 05E99}
\begin{document}
\maketitle


\begin{abstract}
In their unpublished work,   Jockusch and Propp showed that a 2-enumeration of antisymmetric monotone triangles is given by a simple product formula.  On the other hand, the author proved that the same formula counts the domino tilings of the quartered Aztec rectangle.  In this paper, we explain this phenomenon directly by building a correspondence between the antisymmetric monotone triangles and domino tilings of the quartered  Aztec rectangle. 

  \bigskip\noindent \textbf{Keywords:} Aztec diamonds, Aztec rectangles,  domino tilings,  monotone triangles.
\end{abstract}
\section{Introduction}
A \textit{monotone triangle} of order $n$ is a triangular array of integers  
\begin{center}
\begin{tabular}{rccccccccc}
&                 &               &                 &                 &   $a_{1,1}$ \\\noalign{\smallskip\smallskip}
&                 &               &                 & $a_{2,1}$ &                   & $a_{2,2}$ \\\noalign{\smallskip\smallskip}
&                 &               & $a_{3,1}$ &                 &  $a_{3,2}$  &                & $a_{3,3}$\\\noalign{\smallskip\smallskip}
&                 &           \reflectbox{$\ddots$} &            &            \reflectbox{$\ddots$}         &        $\vdots$  &         $\ddots$       &           &    $\ddots$ \\\noalign{\smallskip\smallskip}
& $a_{n,1}$ &               &     $a_{n,2}$            &                  & $\dots$       &               &  $a_{n,n-1}$             &                & $a_{n,n}$\\\noalign{\smallskip\smallskip}
\end{tabular}
\end{center}
whose  entries are strictly increasing  along the rows and weakly increasing along both rising and descending diagonals from left to right.  An \textit{antisymmetric monotone triangle} (AMT) is a monotone triangle, which has $a_{i,k}=-a_{i,i+1-k}$, for any $1\leq i\leq n$ and $1\leq k\leq i$ (see  examples of AMTs in Figures \ref{AMT2}(b) and \ref{AMT6}(b)).

Let  $q$ be a number, then the \textit{$q$-weight} of an AMT is $q^{w}$ if it has $w$ positive entries, which do not appear on the row above. Assume that $n\geq 2$ and $0<a_1< a_2<\dotsc<a_{\lfloor\frac{n}{2}\rfloor}$, the \textit{$q$-enumeration} $A^{q}_{n}(a_1,a_2,\dotsc, a_{\lfloor\frac{n}{2}\rfloor})$ of AMTs is defined as the sum of q-weights of all AMTs of order $n$ whose positive entries on the bottommost row are $a_1,a_2,\dotsc,a_{\lfloor\frac{n}{2}\rfloor}$. Since there is only one AMT of order $1$ that consists of a $0$, we set $A_1^{q}(\emptyset):=1$, for any $q$.

Jockusch and Propp proved a simple product formula for the $2$-enumeration of the AMTs. 
\begin{theorem}[Jockusch and Propp \cite{JP}]\label{JPthm}
Assume that $k,a_1,a_2,\dotsc,a_k$ are positive integers, such that $a_1<a_2<\dotsc<a_k$. The AMTs with positive entries $a_1,a_2,\dotsc,a_k$ on the bottom are 2-enumerated by 
\begin{equation}\label{E}
A^{2}_{2k}(a_1,a_2,\dotsc,a_k)=\frac{2^{k^2}}{0!2!4!\dotsc(2k-2)!}\prod_{1\leq i<j\leq k}(a_j-a_i)\prod_{1\leq i<j\leq k}(a_i+a_j-1)
\end{equation}
and
\begin{equation}\label{O}
A^{2}_{2k+1}(a_1,a_2,\dotsc,a_k)=\frac{2^{k^2}}{1!3!5!\dotsc(2k-1)!}\prod_{1\leq i<j \leq k}(a_j-a_i)\prod_{1\leq i\leq j \leq k}(a_i+a_j-1),
\end{equation}
where the empty products (like $\prod_{1\leq i<j\leq k}(a_j-a_i)$ for $k=1$) equal 1 by convention.
\end{theorem}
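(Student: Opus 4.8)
The plan is to derive Theorem~\ref{JPthm} by combining two ingredients: (i) the author's earlier product formula for the number of domino tilings of the quartered Aztec rectangle, which one checks coincides with the right-hand sides of \eqref{E} and \eqref{O} for the appropriate region; and (ii) a weight-preserving correspondence, to be built in this paper, between AMTs and those tilings. Concretely, for each $n$ and each choice $a_1<a_2<\dotsc<a_k$ of the positive bottom-row entries (with $n=2k$ or $n=2k+1$) I would first single out a quartered Aztec rectangle $\mathcal{R}=\mathcal{R}_n(a_1,\dotsc,a_k)$ whose bumpy boundary is dictated by $n$ and whose dents sit at the positions $a_1,\dotsc,a_k$; the target is then to prove that the full $2$-enumeration $A^2_n(a_1,\dotsc,a_k)$ equals the number of domino tilings of $\mathcal{R}$, after which \eqref{E} and \eqref{O} follow from ingredient (i).

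The correspondence itself I would set up in two stages. First, pass from a domino tiling of $\mathcal{R}$ to a family of non-intersecting lattice paths by the standard routing (Lindström--Gessel--Viennot) picture for Aztec-type regions; since $\mathcal{R}$ is a quarter of a region carrying a line of symmetry, the paths live naturally in a half-strip, and reflecting them across that line yields a symmetric path family. Second, read off from such a family a triangular array: at each level record the horizontal coordinates occupied by the paths, adjoin their negatives, and insert a central $0$ when $n$ is odd. The interlacing of consecutive levels of a non-intersecting path family is exactly the condition that the array be strictly increasing along rows and weakly increasing along both rising and descending diagonals, and antisymmetry holds by construction, so the array is an AMT of order $n$ with positive bottom row $a_1,\dotsc,a_k$; conversely an AMT recovers a path family and hence, undoing the routing, a tiling. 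As for the weight, the cleanest formulation is a genuine bijection between domino tilings of $\mathcal{R}$ and AMTs of order $n$ (with positive bottom row $a_1,\dotsc,a_k$) equipped with an arbitrary $\{0,1\}$-labeling of the $w$ positive entries not present in the row above: each such entry corresponds, under the path--tiling dictionary, to a local configuration of the tiling admitting two completions, and the label records which one is used. Summing over labels then gives $\sum_M 2^{w(M)}=(\text{number of tilings of }\mathcal{R})$, which is precisely $A^2_n(a_1,\dotsc,a_k)$.

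The main obstacle is this last step: pinning down exactly which elementary features of a tiling of $\mathcal{R}$ carry the factor $2$, and verifying that under the path--AMT dictionary they biject with the entries counted by $w(M)$. This needs care near the central column — the odd case, where the second product in \eqref{O} runs over $i\le j$ rather than $i<j$, reflects a boundary contribution there that must be matched — and near the apex of the triangle; away from those places the correspondence is local and routine. Identifying the correct shape and dent positions of $\mathcal{R}_n(a_1,\dotsc,a_k)$ so that its boundary data encode $a_1,\dotsc,a_k$ is a second bookkeeping point, but it introduces no new idea. (An alternative, self-contained route would avoid tilings entirely: impose the antisymmetry constraint in the monotone-triangle interlacing recurrence to get a summation recurrence for $A^2_n$ with base case $A^2_1(\emptyset)=1$, and check the product formulas satisfy it; or express $A^2_n(a_1,\dotsc,a_k)$ as a Pfaffian via Stembridge's symmetric non-intersecting-path method and evaluate it in closed form. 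Both work but are computationally heavier, and the bijective argument is the one in the spirit of this note.)
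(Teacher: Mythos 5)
Your plan is essentially the paper's: Theorem~\ref{JPthm} is obtained by combining the author's earlier product formula for domino tilings of quartered Aztec rectangles (Theorem~\ref{Trithm}) with a correspondence under which each AMT $\tau$ with positive bottom row $a_1,\dotsc,a_k$ accounts for exactly $2^{|S(\tau)|}$ tilings of the appropriate QAR, where $S(\tau)$ is the set of positive entries not repeated in the row above --- precisely the weight in the $2$-enumeration. The only difference is one of implementation: the paper encodes a tiling directly by recording in row $i$ of the triangle the positions of the matched-upward black squares of the $i$-th black row, proves the interlacing inequalities by forced-domino case analysis, and counts each fiber by a column-by-column domino-dropping argument (handling the odd case via the $RO$-regions, or the $TE$-regions with the $2^{-k}$ correction), rather than passing through an explicit non-intersecting lattice-path reformulation as you propose; the hard steps you flag as the ``main obstacle'' are exactly the ones that argument carries out.
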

We denote by  $\E(a_1,a_2,\dotsc,a_k)$ and  $\Od(a_1,a_2,\dotsc,a_k)$ the expressions on the right-hand sides of (\ref{E}) and (\ref{O}), respectively. The Jockusch-Propp's proof of 
Theorem \ref{JPthm} is algebraic, and no combinatorial proof has been known.

\medskip

Let $R$ be a (finite, connected) region on the square lattice. A \textit{domino tiling} of $R$ is a covering of $R$ by dominoes so that there are no gaps or overlaps. We use the notation $\T(R)$ for the number of domino tilings of the region $R$.

The \textit{Aztec diamond} of order $n$ is the union of all unit squares inside the contour $|x|+|y|= n+1$. The Aztec diamond  of order $9$ is shown in Figure \ref{QA}. It has been proven that the number of domino tilings of the Aztec diamond of order $n$ is $2^{\frac{n(n+1)}{2}}$. See \cite{Elkies} for the four original proofs; and e.g.  \cite{Bosio},  \cite{Brualdi},  \cite{Eu},  \cite{Fen}, \cite{Kamioka}, \cite{Kuo}, \cite{propp} for further proofs.

Jockusch and Propp \cite{JP} introduced three types of \textit{quartered Aztec diamonds} (denoted by $R(n)$, $K_a(n)$, and $K_{na}(n)$) obtained by dividing the Aztec diamond of order $n$ into four parts by two zigzag cuts passing the center as in Figure \ref{QA} (for $n=9$). They proved that the domino tilings of the quartered Aztec diamond are enumerated by $\E(a_1,a_2,\dotsc,a_k)$ or  $\Od(a_1,a_2,\dotsc,a_k)$, where $a_1,a_2,\dotsc,a_k$ are the first $k$ odd positive integers or the first $k$ even positive integers. The author gave a new proof for this result by using Ciucu's Factorization Theorem (see \cite[Theorem 1.2]{Ciucu}) in \cite{Trib}. 

\begin{figure}\centering
\setlength{\unitlength}{3947sp}%
\begingroup\makeatletter\ifx\SetFigFont\undefined%
\gdef\SetFigFont#1#2#3#4#5{%
  \reset@font\fontsize{#1}{#2pt}%
  \fontfamily{#3}\fontseries{#4}\fontshape{#5}%
  \selectfont}%
\fi\endgroup%
\resizebox{10cm}{!}{
\begin{picture}(0,0)%
\includegraphics{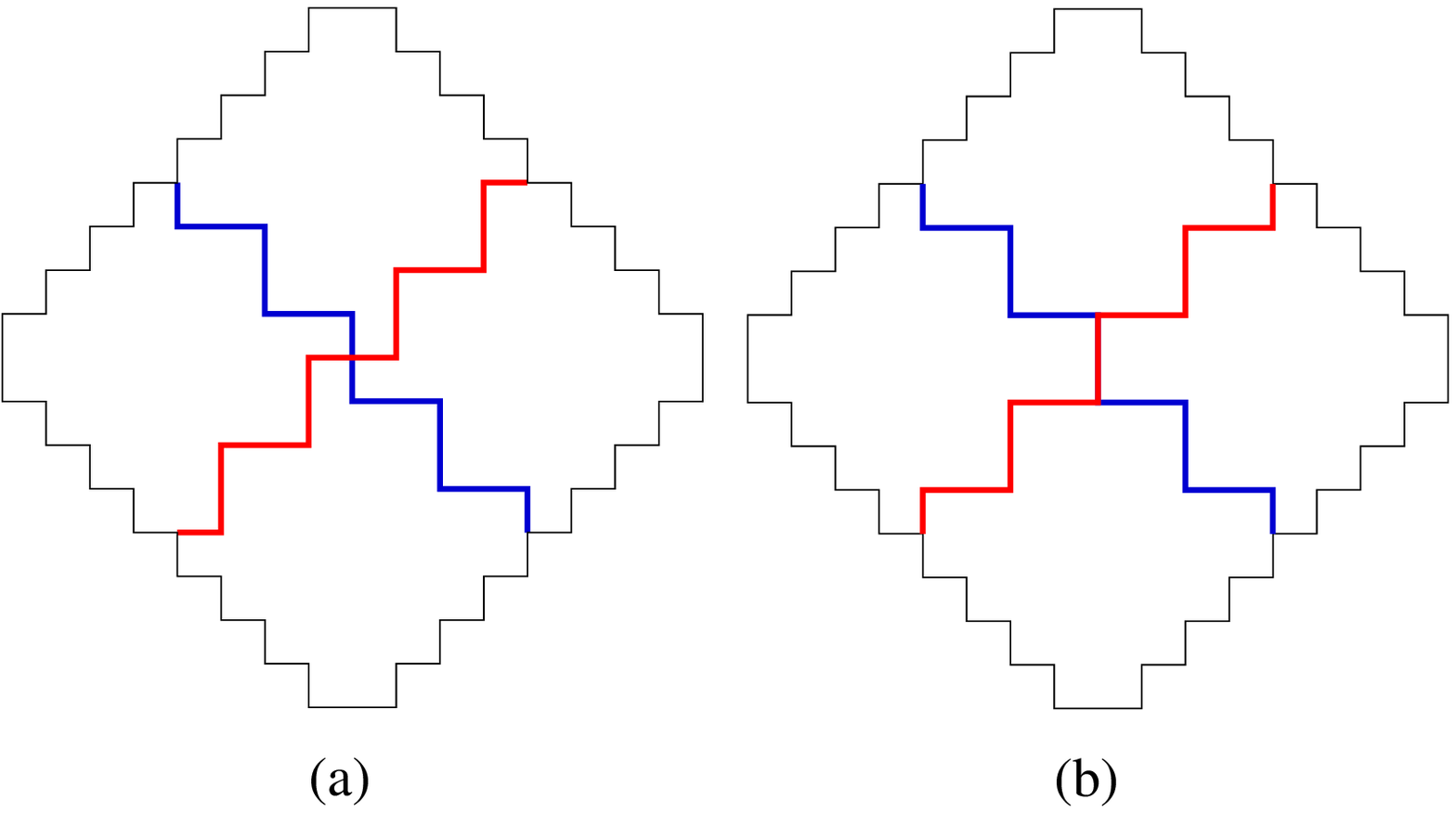}%
\end{picture}%
%
%

\begin{picture}(7827,4375)(698,-3997)
\put(6290,-636){\makebox(0,0)[lb]{\smash{{\SetFigFont{14}{16.8}{\rmdefault}{\mddefault}{\updefault}{$K_{na}(8)$}%
}}}}
\put(1254,-1681){\makebox(0,0)[lb]{\smash{{\SetFigFont{14}{16.8}{\rmdefault}{\mddefault}{\updefault}{$R(8)$}%
}}}}
\put(2357,-583){\makebox(0,0)[lb]{\smash{{\SetFigFont{14}{16.8}{\rmdefault}{\mddefault}{\updefault}{$R(8)$}%
}}}}
\put(3407,-1648){\makebox(0,0)[lb]{\smash{{\SetFigFont{14}{16.8}{\rmdefault}{\mddefault}{\updefault}{$R(8)$}%
}}}}
\put(2372,-2766){\makebox(0,0)[lb]{\smash{{\SetFigFont{14}{16.8}{\rmdefault}{\mddefault}{\updefault}{$R(8)$}%
}}}}
\put(5206,-1666){\makebox(0,0)[lb]{\smash{{\SetFigFont{14}{16.8}{\rmdefault}{\mddefault}{\updefault}{$K_{a}(8)$}%
}}}}
\put(6294,-2739){\makebox(0,0)[lb]{\smash{{\SetFigFont{14}{16.8}{\rmdefault}{\mddefault}{\updefault}{$K_{na}(8)$}%
}}}}
\put(7359,-1659){\makebox(0,0)[lb]{\smash{{\SetFigFont{14}{16.8}{\rmdefault}{\mddefault}{\updefault}{$K_{a}(8)$}%
}}}}
\end{picture}}
\caption{Three kinds of quartered Aztec diamonds of order 9.}
\label{QA}
\end{figure}


\begin{figure}\centering
\setlength{\unitlength}{3947sp}%
\begingroup\makeatletter\ifx\SetFigFont\undefined%
\gdef\SetFigFont#1#2#3#4#5{%
  \reset@font\fontsize{#1}{#2pt}%
  \fontfamily{#3}\fontseries{#4}\fontshape{#5}%
  \selectfont}%
\fi\endgroup%
\resizebox{14cm}{!}{

\begin{picture}(0,0)%
\includegraphics{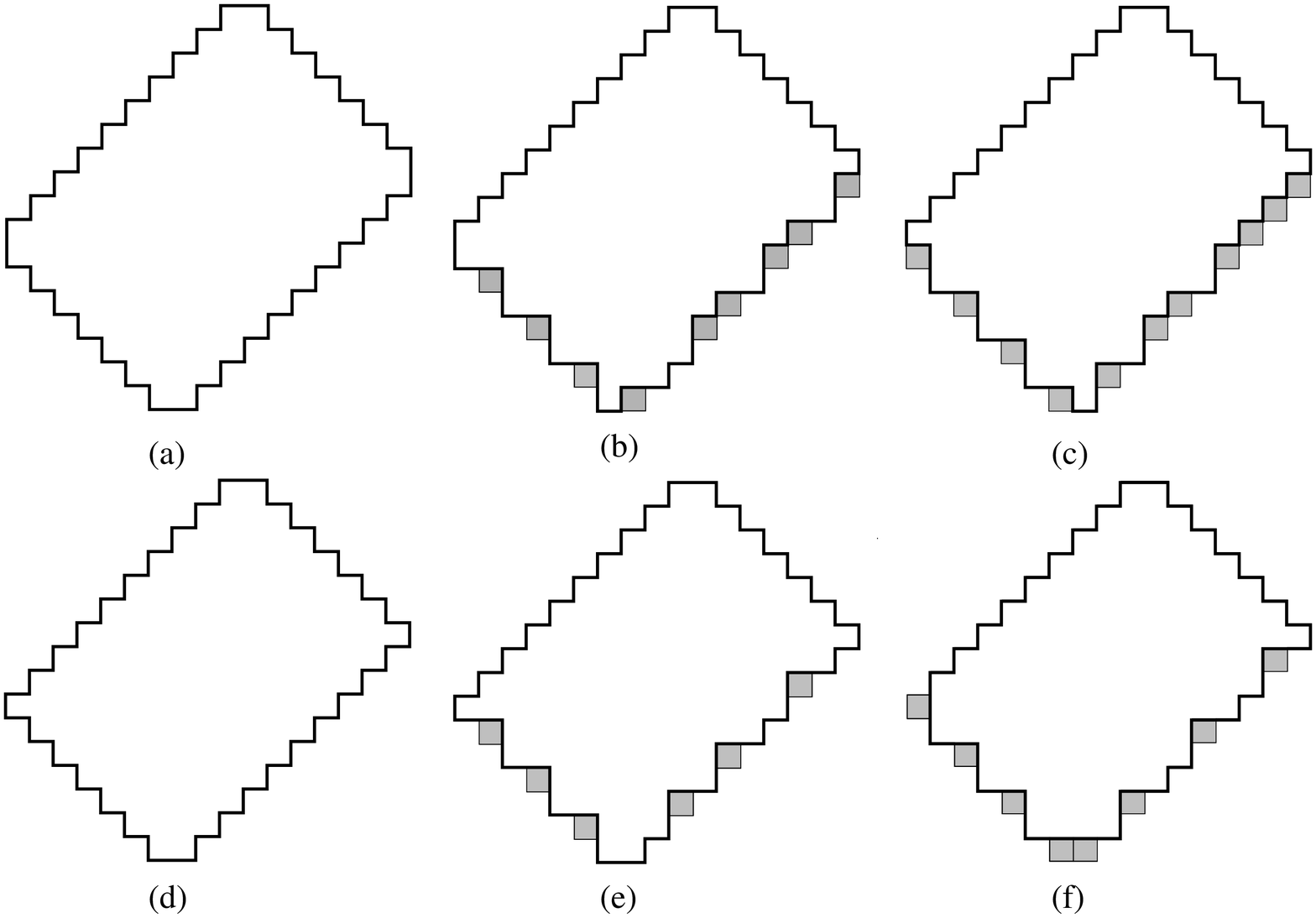}%
\end{picture}%
%
%

\begin{picture}(13035,9119)(440,-8960)
\put(10231,-6864){\makebox(0,0)[lb]{\smash{{\SetFigFont{17}{20.4}{\rmdefault}{\mddefault}{\updefault}{$TO_{7,10}(1,3,6,9)$}%
}}}}
\put(6911,-3635){\makebox(0,0)[lb]{\smash{{\SetFigFont{14}{16.8}{\rmdefault}{\mddefault}{\updefault}{2}%
}}}}
\put(7143,-3395){\makebox(0,0)[lb]{\smash{{\SetFigFont{14}{16.8}{\rmdefault}{\mddefault}{\updefault}{3}%
}}}}
\put(7852,-2675){\makebox(0,0)[lb]{\smash{{\SetFigFont{14}{16.8}{\rmdefault}{\mddefault}{\updefault}{6}%
}}}}
\put(8553,-1955){\makebox(0,0)[lb]{\smash{{\SetFigFont{14}{16.8}{\rmdefault}{\mddefault}{\updefault}{9}%
}}}}
\put(11164,-3852){\makebox(0,0)[lb]{\smash{{\SetFigFont{14}{16.8}{\rmdefault}{\mddefault}{\updefault}{1}%
}}}}
\put(11621,-3379){\makebox(0,0)[lb]{\smash{{\SetFigFont{14}{16.8}{\rmdefault}{\mddefault}{\updefault}{3}%
}}}}
\put(12334,-2659){\makebox(0,0)[lb]{\smash{{\SetFigFont{14}{16.8}{\rmdefault}{\mddefault}{\updefault}{6}%
}}}}
\put(7126,-7914){\makebox(0,0)[lb]{\smash{{\SetFigFont{14}{16.8}{\rmdefault}{\mddefault}{\updefault}{3}%
}}}}
\put(7591,-7449){\makebox(0,0)[lb]{\smash{{\SetFigFont{14}{16.8}{\rmdefault}{\mddefault}{\updefault}{5}%
}}}}
\put(8296,-6736){\makebox(0,0)[lb]{\smash{{\SetFigFont{14}{16.8}{\rmdefault}{\mddefault}{\updefault}{8}%
}}}}
\put(11146,-8379){\makebox(0,0)[lb]{\smash{{\SetFigFont{14}{16.8}{\rmdefault}{\mddefault}{\updefault}{1}%
}}}}
\put(11619,-7914){\makebox(0,0)[lb]{\smash{{\SetFigFont{14}{16.8}{\rmdefault}{\mddefault}{\updefault}{3}%
}}}}
\put(12324,-7194){\makebox(0,0)[lb]{\smash{{\SetFigFont{14}{16.8}{\rmdefault}{\mddefault}{\updefault}{6}%
}}}}
\put(13029,-6496){\makebox(0,0)[lb]{\smash{{\SetFigFont{14}{16.8}{\rmdefault}{\mddefault}{\updefault}{9}%
}}}}
\put(1891,-1996){\makebox(0,0)[lb]{\smash{{\SetFigFont{17}{20.4}{\rmdefault}{\mddefault}{\updefault}{$AR_{7,10}$}%
}}}}
\put(2127,-6720){\makebox(0,0)[lb]{\smash{{\SetFigFont{17}{20.4}{\rmdefault}{\mddefault}{\updefault}{$TR_{7,10}$}%
}}}}
\put(5907,-1995){\makebox(0,0)[lb]{\smash{{\SetFigFont{17}{20.4}{\rmdefault}{\mddefault}{\updefault}{$RE_{7,10}(2,3,6,9)$}%
}}}}
\put(10395,-1995){\makebox(0,0)[lb]{\smash{{\SetFigFont{17}{20.4}{\rmdefault}{\mddefault}{\updefault}{$RO_{7,10}(1,3,6)$}%
}}}}
\put(6016,-6826){\makebox(0,0)[lb]{\smash{{\SetFigFont{17}{20.4}{\rmdefault}{\mddefault}{\updefault}{$TE_{7,10}(3,5,8)$}%
}}}}
\end{picture}}
\caption{The Aztec rectangle, trimmed Aztec rectangle, and the four quartered Aztec rectangle of size $7\times 10$.}
\label{QAztec2}
\end{figure}

Besides the Aztec diamonds, we are also interested in a similar family of regions called \textit{Aztec rectangles} (see Figure \ref{QAztec2}(a) for the Aztec rectangle of size $7\times 10$). Denote by $AR_{m,n}$ the Aztec rectangle region having $m$ squares\footnote{From now on, we use the word \emph{square(s)} to mean \emph{unit square(s)}.} along the southwest side and $n$ squares along the northwest side. We also consider the \textit{trimmed Aztec rectangle} $TR_{m,n}$ obtained from the ordinary $AR_{m,n}$  by removing all squares running along its northwest and northeast sides (illustrated in Figure \ref{QAztec2}(d)).

We generalize the Jockusch-Propp's quartered Aztec diamonds as follows. Remove all squares at even positions (from the bottom to top) on the southwest side of $AR_{m,n}$, and remove  arbitrarily $n-\left\lfloor\frac{m+1}{2}\right\rfloor$  squares on the southeast sides. Assume that we are removing all squares, except for the $a_1$-st, the $a_2$-nd, $\dotsc$, and the $a_{\left\lfloor\frac{m+1}{2}\right\rfloor}$-th ones, from the southeast side, and denote by $RE_{m,n}(a_1,a_2,\dotsc,a_{\left\lfloor\frac{m+1}{2}\right\rfloor})$ the resulting region (see Figure \ref{QAztec2}(b)). We also have an odd-analog $RO_{m,n}(a_1,a_2,\dotsc,a_{\left\lfloor\frac{m}{2}\right\rfloor})$ of the above region when removing odd squares (instead of the even ones) from the southwest side, and removing all squares from the southeast side, except for the squares at the positions $a_1,a_2,\dotsc,a_{\left\lfloor\frac{m}{2}\right\rfloor}$  (illustrated in Figure \ref{QAztec2}(c)).

If we remove all even squares on the southwest side of the trimmed Aztec rectangle $TR_{m,n}$, and also remove the squares at the positions $a_1,a_2,\dotsc,a_{\left\lfloor\frac{m}{2}\right\rfloor}$ from its southeast side, we get the region $TE_{m,n}(a_1,a_2,\dotsc,a_{\left\lfloor\frac{m}{2}\right\rfloor})$ (shown in Figure \ref{QAztec2}(e)). Repeating the process with the odd squares on the southwest side removed, we get the region $TO_{m,n}(a_1,a_2,\dotsc,a_{\left\lfloor\frac{m+1}{2}\right\rfloor})$ (pictured in Figure \ref{QAztec2}(f)). 

We call each of the above four regions a \textit{quartered Aztec rectangle} (QAR). We notice that the quartered Aztec diamonds of order $2k$ are obtained from the $RE$- and $RO$-QARs by specializing $m=n=k$ and $a_i=2i$ or $2i-1$, depending on the region considered; and the quartered Aztec diamonds of order $2k+1$ are obtained similarly from the $TE$- and $TO$-QARs having size $(k+1)\times(k+1)$. 

Surprisingly, the functions  $\E(a_1,a_2,\dotsc,a_k)$ and $\Od(a_1,a_2,\dotsc,a_k)$ in Theorem \ref{JPthm} also counts the domino tilings of the QARs.
\begin{theorem}[Lai \cite{Tri}]\label{Trithm} For any $1\leq k< n$ and $1\leq a_1<a_2<\dotsc<a_k\leq n$ the domino tilings of QARs are enumerated by
\begin{equation}
\T(RE_{2k-1,n}(a_1,a_2,\dotsc,a_k))=\T(RE_{2k,n}(a_1,a_2,\dotsc,a_k))=\E(a_1,a_2,\dotsc,a_k),
\end{equation}
\begin{equation}
\T(RO_{2k,n}(a_1,a_2,\dotsc,a_k))=\T(RO_{2k+1,n}(a_1,a_2,\dotsc,a_k))=\Od(a_1,a_2,\dotsc,a_k),
\end{equation}
\begin{equation}
\T(TE_{2k,n}(a_1,a_2,\dotsc,a_{k}))=\T(TE_{2k+1,n}(a_1,a_2,\dotsc,a_{k}))=2^{-k}\Od(a_1,a_2,\dotsc,a_k),
\end{equation}
\begin{equation}
\T(TO_{2k-1,n}(a_1,a_2,\dotsc,a_{k}))=\T(TO_{2k,n}(a_1,a_2,\dotsc,a_{k}))=2^{-k}\E(a_1,a_2,\dotsc,a_k).
\end{equation}
\end{theorem}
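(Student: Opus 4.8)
The plan is to turn domino tilings of each quartered Aztec rectangle into a family of non-intersecting lattice paths, convert the count to a $k\times k$ determinant via the Lindström--Gessel--Viennot lemma, and evaluate that determinant; the formulas of Theorem~\ref{JPthm} then fall out once one uses $x_i:=2a_i-1$ as variables, because (after factoring out powers of $2$) one has $\E(a_1,\dots,a_k)=c_k\prod_{i<j}(x_j^2-x_i^2)$ and $\Od(a_1,\dots,a_k)=c_k\prod_i x_i\prod_{i<j}(x_j^2-x_i^2)$, i.e.\ Vandermonde-type products in the $x_i^2$. I would begin by disposing of the ``doubling'' equalities $\T(RE_{2k-1,n}(\vec a))=\T(RE_{2k,n}(\vec a))$ and their three analogues by a local forcing argument: increasing $m$ by one appends a thin strip of cells along the zigzag southwest boundary, and, processing the strip from one end, each new cell has a unique free neighbour, so the added dominoes are forced and the count is unchanged. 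Thus it suffices to handle one region from each of the four pairs.

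Next I would relate the trimmed families to the untrimmed ones so that a single determinant suffices. The regions $TE_{m,n}$ and $TO_{m,n}$ differ from the corresponding $RE$- and $RO$-type regions by one layer along the northwest/northeast sides; shaving this layer off --- a standard reduction for Aztec-type regions, in which the layer contributes a factor $2^k$ through $k$ independent binary choices (equivalently, $k$ urban-renewal moves), the rest being forced --- both produces the factor $2^{-k}$ and swaps the even/odd parity of the retained southwest cells, so that $TE$ goes with $\Od$ and $TO$ with $\E$. It remains to prove, say, $\T(RE_{2k,n}(a_1,\dots,a_k))=\E(a_1,\dots,a_k)$.

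For this I would use the standard bijection between domino tilings of an Aztec-type region and families of non-intersecting monotone lattice paths: a tiling of $RE_{2k,n}(\vec a)$ corresponds to $k$ non-intersecting paths whose sources occupy the staircase $1,3,5,\dots,2k-1$ that is forced by the removed even cells on the southwest side, and whose sinks sit at positions $a_1<\dots<a_k$ on the southeast side. The key point is that this staircase of sources acts as a \emph{reflecting wall}. Applying the Lindström--Gessel--Viennot lemma together with the reflection principle to account for the wall expresses $\T(RE_{2k,n}(\vec a))$, possibly after factoring out a power of $2$ accrued in the reduction, as a determinant $\det\big(M_{ij}\big)_{1\le i,j\le k}$ whose entries are differences $\binom{\ast}{a_i-j}-\binom{\ast}{a_i+j-1}$ of binomial coefficients; in the $RO$/$\Od$ case one gets sums in place of differences, which is exactly the bookkeeping that distinguishes $\E$ from $\Od$.

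Finally I would evaluate $\det(M_{ij})$. Column operations convert the binomial entries into polynomial entries; the resulting determinant is antisymmetric in the $x_i=2a_i-1$, and a vanishing-order count forces it to be a constant multiple of $\prod_{i<j}(x_j^2-x_i^2)$ (times $\prod_i x_i$ in the $\Od$ case), which up to a power of $2$ is exactly $\prod_{i<j}(a_j-a_i)(a_i+a_j-1)$. The remaining constant $2^{k^2}/(0!2!4!\cdots(2k-2)!)$ (respectively with the odd factorials) is then fixed by specializing $\vec a$ to the first $k$ even (respectively odd) positive integers --- which by the remarks preceding Theorem~\ref{Trithm} returns the Jockusch--Propp quartered Aztec diamond, whose tiling number one can compute independently from $\T(\text{Aztec diamond of order }N)=2^{N(N+1)/2}$ together with Ciucu's Factorization Theorem. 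I expect this last step, the collapse of the difference-of-binomials determinant to a Vandermonde in the squares and the exact determination of the constant, to be the main obstacle; an alternative that sidesteps it is to run Kuo's graphical condensation on the four QAR families simultaneously and verify the product formulas by induction, the difficulty there being to check that every region produced by condensation is again one of the four types with shifted parameters.
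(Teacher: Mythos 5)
First, a point of orientation: this paper does not actually prove Theorem~\ref{Trithm} --- the theorem is imported from \cite{Tri}, and what the paper proves is the set of identities (\ref{main1})--(\ref{main4}), via a weight-preserving correspondence between domino tilings of the QARs and antisymmetric monotone triangles. Within this paper, Theorem~\ref{Trithm} would be recovered by combining that correspondence with the Jockusch--Propp formula (Theorem~\ref{JPthm}). Your determinantal programme is therefore a genuinely different route from anything in the present text. Your opening step --- disposing of the ``doubling'' equalities such as $\T(RE_{2k-1,n})=\T(RE_{2k,n})$ by observing that the appended strip carries only forced dominoes --- is correct and coincides with how the paper handles those equalities. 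Your algebraic observation that, with $x_i=2a_i-1$, both $\E$ and $\Od$ become Vandermonde-type products in the $x_i^2$ (with an extra $\prod_i x_i$ in the odd case) is also correct and is indeed the shape one expects from a reflection-type determinant.

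However, as a proof the proposal has gaps beyond the one you flag yourself. (1) The standard path dictionary for Aztec-type regions does not produce monotone lattice paths: it produces families of Schr\"oder-type paths with three step types, so the Lindstr\"om--Gessel--Viennot entries are Delannoy-like sums of products of binomials rather than single binomial coefficients, and the classical reflection principle does not apply verbatim to such paths. The clean entries $\binom{\ast}{a_i-j}-\binom{\ast}{a_i+j-1}$ you posit would have to be derived, and that derivation is essentially the whole content of the missing argument. (2) The claim that trimming the northwest/northeast layer costs exactly a factor $2^{k}$ through ``$k$ independent binary choices'' is asserted, not proved, and it is not a local forcing statement; for comparison, in the present paper the factor $2^{-k}$ in (\ref{main3})--(\ref{main4}) arises globally, from the identity $|V(\tau)|=|S(\tau)|-k$ relating two different statistics on the same AMT, not from shaving off a layer of the region. (3) The determinant evaluation and the determination of the constant are explicitly deferred. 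So what you have is a plausible programme whose hardest steps remain open; the Kuo-condensation alternative you mention at the end is a viable substitute, but there, too, the real work is in verifying that every region produced by condensation is again one of the four QAR families with shifted parameters, which you have not done.
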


Theorems \ref{JPthm} and \ref{Trithm} imply that
\begin{corollary}\label{coro} For any $1\leq k< n$ and $1\leq a_1<a_2<\dotsc<a_k\leq n$ 
\begin{equation}\label{main1}
\T(RE_{2k-1,n}(a_1,a_2,\dotsc,a_k))=\T(RE_{2k,n}(a_1,a_2,\dotsc,a_k))=A^{2}_{2k}(a_1,a_2,\dotsc,a_k),
\end{equation}
\begin{equation}\label{main2}
\T(RO_{2k,n}(a_1,a_2,\dotsc,a_k))=\T(RO_{2k+1,n}(a_1,a_2,\dotsc,a_k))=A^{2}_{2k+1}(a_1,a_2,\dotsc,a_k),
\end{equation}
\begin{equation}\label{main3}
\T(TE_{2k,n}(a_1,a_2,\dotsc,a_{k}))=\T(TE_{2k+1,n}(a_1,a_2,\dotsc,a_{k}))=2^{-k}A^{2}_{2k+1}(a_1,a_2,\dotsc,a_k),
\end{equation}
\begin{equation}\label{main4}
\T(TO_{2k-1,n}(a_1,a_2,\dotsc,a_{k}))=\T(TO_{2k,n}(a_1,a_2,\dotsc,a_{k}))=2^{-k}A^{2}_{2k}(a_1,a_2,\dotsc,a_k).
\end{equation}
\end{corollary}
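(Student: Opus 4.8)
The plan is to deduce the corollary by composing the two enumeration results already stated; no new combinatorial or algebraic input is needed. First I would record the bookkeeping point that, by the definitions given immediately after Theorem~\ref{JPthm}, the symbols $\E(a_1,\dots,a_k)$ and $\Od(a_1,\dots,a_k)$ are merely names for the right-hand sides of \eqref{E} and \eqref{O}. Hence Theorem~\ref{JPthm} can be read as the two identities
\[
A^{2}_{2k}(a_1,\dots,a_k)=\E(a_1,\dots,a_k),\qquad A^{2}_{2k+1}(a_1,\dots,a_k)=\Od(a_1,\dots,a_k),
\]
valid whenever $a_1<\dots<a_k$ are positive integers.

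Next I would check that the hypotheses of the corollary, namely $1\le k<n$ and $1\le a_1<\dots<a_k\le n$, are simultaneously strong enough to invoke both source results: they obviously guarantee that $a_1,\dots,a_k$ are positive integers in increasing order, so Theorem~\ref{JPthm} applies, and they are verbatim the hypotheses of Theorem~\ref{Trithm}. With that in place, \eqref{main1} is obtained by the chain
\[
\T(RE_{2k-1,n}(a_1,\dots,a_k))=\T(RE_{2k,n}(a_1,\dots,a_k))=\E(a_1,\dots,a_k)=A^{2}_{2k}(a_1,\dots,a_k),
\]
where the first two equalities come from the first line of Theorem~\ref{Trithm} and the last from Theorem~\ref{JPthm}. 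The remaining identities \eqref{main2}, \eqref{main3}, \eqref{main4} follow by the same two-step substitution applied to the other three lines of Theorem~\ref{Trithm}; the constant factor $2^{-k}$ appearing in \eqref{main3} and \eqref{main4} is simply transported unchanged from those lines.

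There is no genuine obstacle here: the corollary is a pure substitution, and the only thing deserving a moment's attention is confirming that the hypotheses of Theorems~\ref{JPthm} and~\ref{Trithm} are both met under the corollary's assumptions, which they are. I would add the remark, however, that the substantive content of the paper is not this formal deduction but the \emph{direct} correspondence between antisymmetric monotone triangles of order $2k$ (resp.\ $2k+1$) and domino tilings of the associated $RE$- or $RO$-quartered Aztec rectangle, which re-proves \eqref{main1}--\eqref{main4} without passing through the product formulas at all.
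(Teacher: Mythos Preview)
Your proposal is correct and matches the paper's own justification of the corollary, which is simply the one-line remark ``Theorems~\ref{JPthm} and~\ref{Trithm} imply that'' preceding the statement; you have spelled out exactly that substitution. Your closing remark is also on target: Section~2 of the paper supplies a separate, direct combinatorial proof of \eqref{main1}--\eqref{main4} via the maps $\Phi$ and $\Psi$, bypassing the product formulas entirely, and that is indeed where the paper's real content lies.
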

In this paper,  we give a direct combinatorial proof of the identities (\ref{main1})--(\ref{main4}) by giving a correspondence between the AMTs and the domino tilings of the QARs.  The proof extends an idea of Jockusch and Propp for the case of quartered Aztec diamonds. On the other hand, our combinatorial proof and Theorem \ref{Trithm} yield a combinatorial proof for Jockusch-Propp's Theorem \ref{JPthm}.

Finally, it is worth to notice that Ciucu \cite[Theore 4.1]{Ciucu} showed a related correspondence between the domino tilings of the \emph{Aztec rectangle with holes} (i.e. Aztec rectangle with certain squares removed) and a 2-enumeration of the so-called \emph{alternating sign matrices}, which in turn are in bijection with the monotone triangles (see \cite{Mill}). However, the 2-enumeration of alternating sign matrices (or monotone triangles) does \emph{not} imply the 2-enumeration of AMTs in Theorem \ref{JPthm}; and the enumeration of tilings of an Aztec rectangle with holes does \emph{not} imply directly the  tiling formula of the QARs either.

\section{Direct proof of the identities (\ref{main1})--(\ref{main4})}

We  prove only the equalities (\ref{main1}) and (\ref{main3}), as (\ref{main2}) and (\ref{main4}) can be obtained by a perfectly analogous manner.

In this proof, we always rotate the QARs by $45^{\circ}$ clockwise to help the visualization of our arguments. First, we can see that the dominoes on the top of  $RE_{2k,n}(a_1,a_2,\dotsc,a_k)$ are all forced; and by removing these dominoes, we get the region $RE_{2k-1,n}(a_1,a_2,\dotsc,a_k)$. This implies that the two QARs in (\ref{main1}) have the same number of domino tilings. Thus, we only need to show that
\begin{equation*}
\T(RE_{2k-1,n}(a_1,a_2,\dotsc,a_k))=A^{2}_{2k}(a_1,a_2,\dotsc,a_k).
\end{equation*}

Denote by $\mathcal{T}(R)$ the set of all tilings of a region $R$, and $\mathcal{A}_{n}(a_1,a_2,\dotsc,a_{\lfloor\frac{n}{2}\rfloor})$ the set of all AMTs of order $n$ having positive entries $a_1<a_2<\dotsc<a_{\lfloor\frac{n}{2}\rfloor}$ on the bottom. 

Next, we define a map
\[\Phi: \mathcal{T}(RE_{2k-1,n}(a_1,a_2,\dotsc,a_k))\rightarrow\mathcal{A}_{2k}(a_1,a_2,\dotsc,a_k)\]
as follows. 
\begin{figure}\centering
\includegraphics[width=14cm]{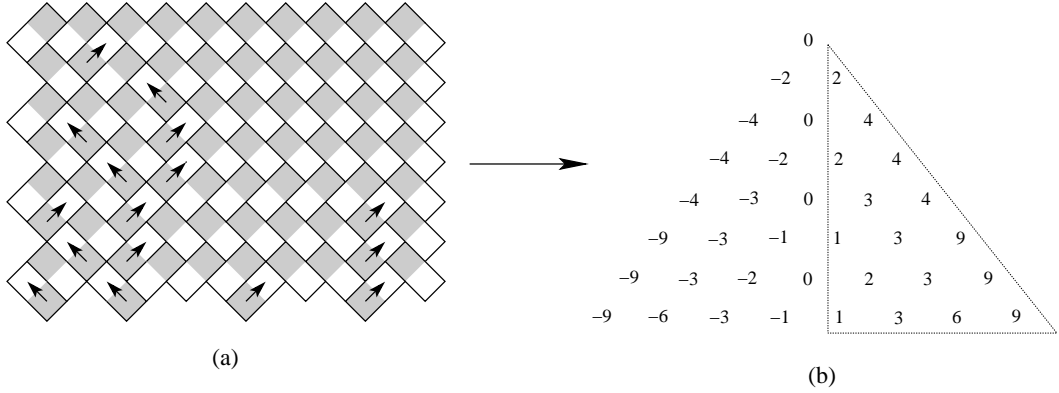}%
\caption{A domino tiling of $RE_{7,10}(2,3,6,9)$ and the corresponding AMT.}
\label{AMT2}
\end{figure}

Color the region $RE_{2k-1,n}(a_1,a_2,\dotsc,a_k)$ black and white so that any two squares sharing an edge have different colors, and that the bottommost squares are black.
Given a tiling $T$ of  the region. Figure \ref{AMT2}(a) shows a sample domino tiling of the QAR (with $k=4$, $n=10$, $a_1=1$, $a_2=3$, $a_3=6$, and $a_4=9$). We say a black square is \textit{matched upward} or \textit{matched downward}, depending on whether the white square covered by the same domino is above or below it. We use the arrows to show in Figure \ref{AMT2}(a) the dominoes containing matched-upward black squares. The QAR (rotated by $45^{\circ}$) can be partitioned into $4k-1$ rows of squares; and we call these rows black or white depending on the color of their squares.

We now describe the AMT $\tau:=\Phi(T)$. We label all black squares on each row by $1,2,\dots,n$ from left to right (we also label all black squares removed on the bottommost row). Denote by $B(i,j)$ the black square at the position $j$ on the $i$-th row (from the top). The positive entries in the $i$-th row of $\tau$ are the labels (positions) of the matched-upward squares on the $i$-th black row. By the antisymmetry, $\tau$ is completely determined by its positive entries (see the illustration in Figure \ref{AMT2}(b); the positive entries of the AMT are restricted inside the dotted right triangle).

\medskip

We would like to show that $\Phi$ is well defined, i.e. we will verify that $\tau=\Phi(T)\in \mathcal{A}_{2k}(a_1,a_2,\dotsc,a_k)$. Consider the strip consisting of $2(i-1)$ top rows in the QAR, for $i\leq 2k$. There are $(i-1)n$ black squares and $(i-1)n+\lfloor\frac{i}{2}\rfloor$ white squares in this strip. Thus, there are $\lfloor\frac{i}{2}\rfloor$ white squares in the strip, which are matched with some black squares outside the strip. These must be $\lfloor\frac{i}{2}\rfloor$ white squares on the $(i-1)$-th white row, and those white squares are matched with some black squares on the $i$-th black row. It means that there are $\lfloor\frac{i}{2}\rfloor$ matched-upward black squares on the $i$-th black row; equivalently, the $i$-th row of $\tau$ has exactly $\lfloor\frac{i}{2}\rfloor$ positive entries. 

Since each row of $\tau$ records the positions of matched-upward squares in a black row, the entries in each row of $\tau$ are strictly increasing from left to right. We now want to verify that the entries in each diagonal are weakly increasing from left to right. Since $\tau$ is antisymmetric, we only need to prove this monotonicity for the right half of $\tau$, which contains all positive entries. Assume that the $i$-th row of $\tau$ has the positive entries $0<t_{i,1}<t_{i,2}<\dots<t_{i,l(i)}$, where $l(i)=\lfloor\frac{i}{2}\rfloor$. We only need to show that
\begin{equation}\label{ineq1}
t_{i-1,1}\leq t_{i,1}\leq t_{i-1,2}\leq t_{i,2}\leq \dots\leq t_{i-1,l(i-1)}\leq t_{i,l(i)}
\end{equation}
for any odd $i$, and that
\begin{equation}\label{ineq2}
t_{i,1}\leq t_{i-1,1}\leq t_{i,2}\leq t_{i-1,2}\leq t_{i,3}\leq  \dots\leq t_{i-1,l(i-1)}\leq t_{i,l(i)}
\end{equation}
for any even $i$.

We consider first the case when $i$ is odd. Consider the first inequality $t_{i-1,1}\leq t_{i,1}$. We assume otherwise that $t_{i-1,1}> t_{i,1}$, then by definition, all (black) squares $B(i-1,1), B(i-1,2),\dotsc,B(i-1,t_{i,1})$ are matched downward. However, the square $B(i,t_{i,1})$ cannot be matched upward, since all white neighbor squares above it are already matched with other black squares, a contradiction (see Figure \ref{AMT3}(a) for $t_{i,1}=5$; the upper black row is the $(i-1)$-th one; the matched-downward squares on the $i$-th black row are not shown in the picture; and the square having black core indicates the square that cannot be matched).

\begin{figure}\centering
\setlength{\unitlength}{3947sp}%
\begingroup\makeatletter\ifx\SetFigFont\undefined%
\gdef\SetFigFont#1#2#3#4#5{%
  \reset@font\fontsize{#1}{#2pt}%
  \fontfamily{#3}\fontseries{#4}\fontshape{#5}%
  \selectfont}%
\fi\endgroup%
\resizebox{14cm}{!}{
\begin{picture}(0,0)%
\includegraphics{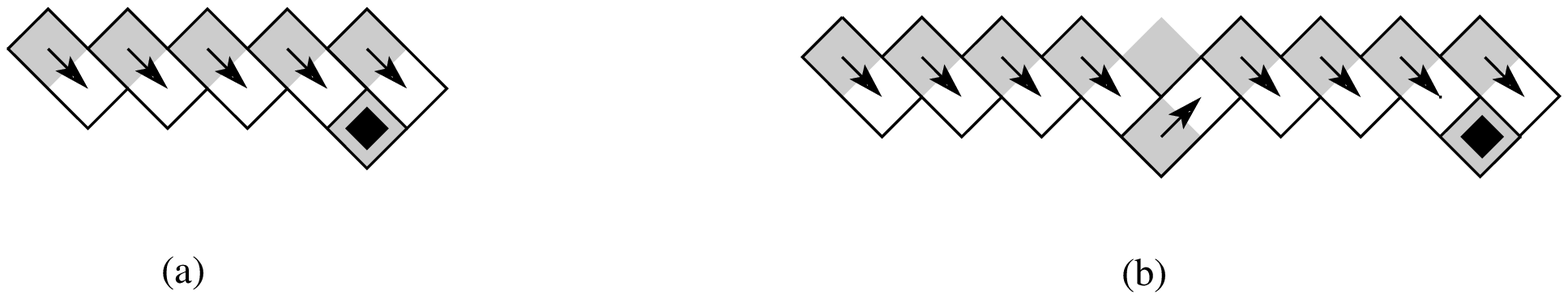}%
\end{picture}%
%
%

\begin{picture}(9999,2126)(631,-3714)
\put(3241,-3271){\makebox(0,0)[lb]{\smash{{\SetFigFont{14}{16.8}{\rmdefault}{\mddefault}{\itdefault}{\color[rgb]{0,0,0}$t_{i,1}=5$}%
}}}}
\put(1606,-1801){\makebox(0,0)[lb]{\smash{{\SetFigFont{14}{16.8}{\rmdefault}{\mddefault}{\itdefault}{\color[rgb]{0,0,0}$1$}%
}}}}
\put(2101,-1801){\makebox(0,0)[lb]{\smash{{\SetFigFont{14}{16.8}{\rmdefault}{\mddefault}{\itdefault}{\color[rgb]{0,0,0}$2$}%
}}}}
\put(2521,-1801){\makebox(0,0)[lb]{\smash{{\SetFigFont{14}{16.8}{\rmdefault}{\mddefault}{\itdefault}{\color[rgb]{0,0,0}$3$}%
}}}}
\put(3001,-1801){\makebox(0,0)[lb]{\smash{{\SetFigFont{14}{16.8}{\rmdefault}{\mddefault}{\itdefault}{\color[rgb]{0,0,0}$4$}%
}}}}
\put(3511,-1801){\makebox(0,0)[lb]{\smash{{\SetFigFont{14}{16.8}{\rmdefault}{\mddefault}{\itdefault}{\color[rgb]{0,0,0}$5$}%
}}}}
\put(781,-2776){\makebox(0,0)[lb]{\smash{{\SetFigFont{14}{16.8}{\rmdefault}{\mddefault}{\itdefault}{\color[rgb]{0,0,0}$i$}%
}}}}
\put(646,-2356){\makebox(0,0)[lb]{\smash{{\SetFigFont{14}{16.8}{\rmdefault}{\mddefault}{\itdefault}{\color[rgb]{0,0,0}$i-1$}%
}}}}
\put(9743,-3268){\makebox(0,0)[lb]{\smash{{\SetFigFont{14}{16.8}{\rmdefault}{\mddefault}{\itdefault}{\color[rgb]{0,0,0}$t_{i,2}=9$}%
}}}}
\put(7890,-3298){\makebox(0,0)[lb]{\smash{{\SetFigFont{14}{16.8}{\rmdefault}{\mddefault}{\itdefault}{\color[rgb]{0,0,0}$t_{i,1}=5$}%
}}}}
\put(6270,-1801){\makebox(0,0)[lb]{\smash{{\SetFigFont{14}{16.8}{\rmdefault}{\mddefault}{\itdefault}{\color[rgb]{0,0,0}$1$}%
}}}}
\put(6757,-1801){\makebox(0,0)[lb]{\smash{{\SetFigFont{14}{16.8}{\rmdefault}{\mddefault}{\itdefault}{\color[rgb]{0,0,0}$2$}%
}}}}
\put(7215,-1801){\makebox(0,0)[lb]{\smash{{\SetFigFont{14}{16.8}{\rmdefault}{\mddefault}{\itdefault}{\color[rgb]{0,0,0}$3$}%
}}}}
\put(8175,-1801){\makebox(0,0)[lb]{\smash{{\SetFigFont{14}{16.8}{\rmdefault}{\mddefault}{\itdefault}{\color[rgb]{0,0,0}$5$}%
}}}}
\put(8655,-1801){\makebox(0,0)[lb]{\smash{{\SetFigFont{14}{16.8}{\rmdefault}{\mddefault}{\itdefault}{\color[rgb]{0,0,0}$6$}%
}}}}
\put(9120,-1801){\makebox(0,0)[lb]{\smash{{\SetFigFont{14}{16.8}{\rmdefault}{\mddefault}{\itdefault}{\color[rgb]{0,0,0}$7$}%
}}}}
\put(9592,-1801){\makebox(0,0)[lb]{\smash{{\SetFigFont{14}{16.8}{\rmdefault}{\mddefault}{\itdefault}{\color[rgb]{0,0,0}$8$}%
}}}}
\put(10065,-1801){\makebox(0,0)[lb]{\smash{{\SetFigFont{14}{16.8}{\rmdefault}{\mddefault}{\itdefault}{\color[rgb]{0,0,0}$9$}%
}}}}
\put(5542,-2367){\makebox(0,0)[lb]{\smash{{\SetFigFont{14}{16.8}{\rmdefault}{\mddefault}{\itdefault}{\color[rgb]{0,0,0}$i-1$}%
}}}}
\put(5625,-2817){\makebox(0,0)[lb]{\smash{{\SetFigFont{14}{16.8}{\rmdefault}{\mddefault}{\itdefault}{\color[rgb]{0,0,0}$i$}%
}}}}
\put(7687,-1801){\makebox(0,0)[lb]{\smash{{\SetFigFont{14}{16.8}{\rmdefault}{\mddefault}{\itdefault}{\color[rgb]{0,0,0}$4$}%
}}}}
\end{picture}}
\caption{Illustrating the proof of the inequality (\ref{ineq1}).}
\label{AMT3}
\end{figure}

We now show that 
\begin{claim}
$t_{i,1}\leq t_{i-1,2}\leq t_{i,2}$ if $l(i)\geq 2$. 
\end{claim}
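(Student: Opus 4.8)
The plan is to argue exactly along the lines of the just-completed proof of the first inequality $t_{i-1,1}\le t_{i,1}$, but now keeping careful track of which white squares above the $(i-1)$-st black row are already spoken for. I will treat the two inequalities in the claim separately.

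For the inequality $t_{i,1}\le t_{i-1,2}$: suppose for contradiction that $t_{i-1,2}<t_{i,1}$. Since $t_{i-1,1}<t_{i-1,2}<t_{i,1}$, the two black squares $B(i-1,t_{i-1,1})$ and $B(i-1,t_{i-1,2})$ are the only matched-upward squares on the $(i-1)$-st black row among positions $1,2,\dots,t_{i,1}$, and every other black square in that initial segment, in particular all of $B(i-1,t_{i-1,2}+1),\dots,B(i-1,t_{i,1})$, is matched downward. A matched-downward square on the $(i-1)$-st black row uses a white square strictly below it (on the $(i-1)$-st white row, the one lying between the $i$-th and the $(i-1)$-st black rows after the $45^\circ$ rotation). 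Now look at the square $B(i,t_{i,1})$, which is supposed to be matched upward: the two white neighbours directly above it lie on that same $(i-1)$-st white row, and a quick position check shows they are exactly the white squares adjacent to $B(i-1,t_{i,1}-1)$ and $B(i-1,t_{i,1})$ from below — both of which we have just argued are matched downward. Hence $B(i,t_{i,1})$ has no available white neighbour above, contradicting that it is matched upward (this is the same phenomenon as in Figure \ref{AMT3}(a), now pushed one step further to the right; see Figure \ref{AMT3}(b) with $t_{i,1}=5$). I would draw the picture explicitly so the reader can see that allowing the single matched-upward square $B(i-1,t_{i-1,1})$ to the left does not free up either of the two white cells above $B(i,t_{i,1})$.

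For the inequality $t_{i-1,2}\le t_{i,2}$: suppose for contradiction that $t_{i,2}<t_{i-1,2}$. Combined with the already-established $t_{i,1}\le t_{i-1,2}$ and $t_{i,1}<t_{i,2}$, this means that on the $i$-th black row the only matched-upward squares among positions $1,\dots,t_{i-1,2}$ are $B(i,t_{i,1})$ and $B(i,t_{i,2})$, so all of $B(i,t_{i,1}+1),\dots$ except position $t_{i,2}$, and in particular $B(i,t_{i-1,2}-1)$ and $B(i,t_{i-1,2})$, are matched downward — meaning the two white cells directly below $B(i-1,t_{i-1,2})$ on the $i$-th white... wait, rather: the white square on the $(i-1)$-st white row that sits just below $B(i-1,t_{i-1,2})$ is also just above one of $B(i,t_{i-1,2}-1), B(i,t_{i-1,2})$, both matched downward, so it is free; but symmetrically one then chases the matching on the white row just above the $(i-1)$-st black row to show $B(i-1,t_{i-1,2})$ cannot in fact be matched upward, contradiction. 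The cleanest way to organize both directions uniformly is a counting argument: restrict to the columns $1,\dots,\max(t_{i,2},t_{i-1,2})$ across the two black rows and the three intervening/bounding white rows, count black versus white squares in this truncated strip, and observe that the only way to saturate the matching forces $t_{i-1,2}\le t_{i,2}$ and $t_{i,1}\le t_{i-1,2}$ simultaneously; this mirrors the global count ``$( i-1)n$ black vs.\ $(i-1)n+\lfloor i/2\rfloor$ white'' already used to show $\Phi$ is well defined.

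The main obstacle I anticipate is purely bookkeeping: getting the column indices of the relevant white squares exactly right after the $45^\circ$ rotation, since an upward match from $B(i,j)$ can go to the white neighbour associated with column $j-1$ or column $j$ on the white row above, and one must be scrupulous about the off-by-one shifts at each of the three black/white interfaces involved. I would handle this by fixing once and for all, in a short preliminary paragraph, a coordinate convention for the black and white squares on consecutive rows (e.g.\ recording precisely which two black squares on row $i-1$ share an edge with a given white square on the $(i-1)$-st white row), and then the two inequalities of the claim follow by the same local ``all neighbours already used'' contradiction, with a figure for each case.
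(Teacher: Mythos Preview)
Your proposal has genuine gaps, and it also diverges from the paper's argument in a way that is worth noting.

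For the first inequality $t_{i,1}\le t_{i-1,2}$, your contradiction argument asserts that both $B(i-1,t_{i,1}-1)$ and $B(i-1,t_{i,1})$ are matched downward. But under the assumed inequality $t_{i-1,2}<t_{i,1}$ you have only shown that positions $t_{i-1,2}+1,\dots,t_{i,1}$ on row $i-1$ are matched downward; when $t_{i-1,2}=t_{i,1}-1$ this says nothing about $B(i-1,t_{i,1}-1)$, which is in fact matched \emph{upward}. In that edge case the left white neighbour above $B(i,t_{i,1})$ is not obviously blocked by your reasoning, and the contradiction does not close. More generally, knowing that $B(i-1,j)$ is matched downward tells you it occupies \emph{one} of its two lower white neighbours, not both; your ``quick position check'' would need to track exactly which white cells are consumed, and this is precisely the bookkeeping you postponed.

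For the second inequality $t_{i-1,2}\le t_{i,2}$, your write-up openly abandons the local blocking argument midway (``wait, rather'') and proposes instead a truncated black/white count over columns $1,\dots,\max(t_{i,2},t_{i-1,2})$, but does not carry it out. As stated it is a plan, not a proof; the boundary of the truncated strip interacts with dominoes crossing the cutoff column, and you have not said how to control that.

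The paper avoids both difficulties by organising the argument differently: it splits on whether $t_{i-1,1}=t_{i,1}$ or $t_{i-1,1}<t_{i,1}$ (using the inequality already proved). In the equality case $t_{i,1}\le t_{i-1,2}$ is immediate, and only $t_{i-1,2}\le t_{i,2}$ needs the local blocking contradiction. In the strict case the paper observes that the dominoes containing $B(i-1,t_{i-1,1}+1),\dots,B(i-1,t_{i,1}-1)$ are \emph{forced} (by propagation from the two known matched-upward squares $B(i-1,t_{i-1,1})$ and $B(i,t_{i,1})$), hence all matched downward; this yields $t_{i-1,2}\ge t_{i,1}$ directly, with no contradiction step and no edge-case leak. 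The remaining inequality $t_{i-1,2}\le t_{i,2}$ is then handled by the same blocking contradiction as in the equality case. The key idea you are missing is this forcing step: rather than assuming $t_{i-1,2}$ lies to the left and trying to block $B(i,t_{i,1})$, one shows positively that the intermediate squares on row $i-1$ have no choice, which pushes $t_{i-1,2}$ to the right automatically.
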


\begin{proof}There are two cases to distinguish.

\medskip

\begin{figure}\centering
\setlength{\unitlength}{3947sp}%
\begingroup\makeatletter\ifx\SetFigFont\undefined%
\gdef\SetFigFont#1#2#3#4#5{%
  \reset@font\fontsize{#1}{#2pt}%
  \fontfamily{#3}\fontseries{#4}\fontshape{#5}%
  \selectfont}%
\fi\endgroup%
\resizebox{14cm}{!}{
\begin{picture}(0,0)%
\includegraphics{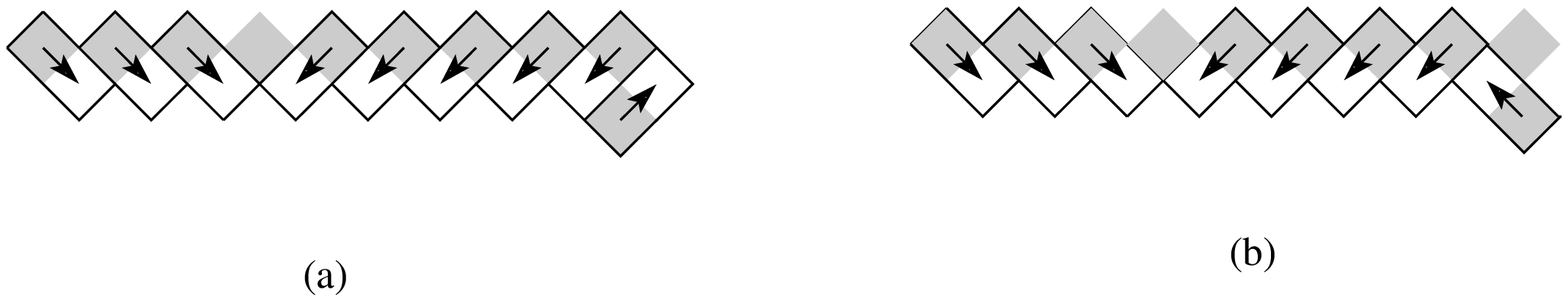}%
\end{picture}%
%
%

\begin{picture}(10992,2604)(133,-2032)
\put(7821,340){\makebox(0,0)[lb]{\smash{{\SetFigFont{14}{16.8}{\rmdefault}{\mddefault}{\updefault}{\color[rgb]{0,0,0}$t_{i-1,1}=4$}%
}}}}
\put(10356,-1475){\makebox(0,0)[lb]{\smash{{\SetFigFont{14}{16.8}{\rmdefault}{\mddefault}{\updefault}{\color[rgb]{0,0,0}$t_{i,1}=9$}%
}}}}
\put(6797,-26){\makebox(0,0)[lb]{\smash{{\SetFigFont{14}{16.8}{\rmdefault}{\mddefault}{\itdefault}{\color[rgb]{0,0,0}$1$}%
}}}}
\put(7284,-26){\makebox(0,0)[lb]{\smash{{\SetFigFont{14}{16.8}{\rmdefault}{\mddefault}{\itdefault}{\color[rgb]{0,0,0}$2$}%
}}}}
\put(7742,-34){\makebox(0,0)[lb]{\smash{{\SetFigFont{14}{16.8}{\rmdefault}{\mddefault}{\itdefault}{\color[rgb]{0,0,0}$3$}%
}}}}
\put(8214,-26){\makebox(0,0)[lb]{\smash{{\SetFigFont{14}{16.8}{\rmdefault}{\mddefault}{\itdefault}{\color[rgb]{0,0,0}$4$}%
}}}}
\put(8702,-26){\makebox(0,0)[lb]{\smash{{\SetFigFont{14}{16.8}{\rmdefault}{\mddefault}{\itdefault}{\color[rgb]{0,0,0}$5$}%
}}}}
\put(9182,-26){\makebox(0,0)[lb]{\smash{{\SetFigFont{14}{16.8}{\rmdefault}{\mddefault}{\itdefault}{\color[rgb]{0,0,0}$6$}%
}}}}
\put(9647,-26){\makebox(0,0)[lb]{\smash{{\SetFigFont{14}{16.8}{\rmdefault}{\mddefault}{\itdefault}{\color[rgb]{0,0,0}$7$}%
}}}}
\put(10119,-26){\makebox(0,0)[lb]{\smash{{\SetFigFont{14}{16.8}{\rmdefault}{\mddefault}{\itdefault}{\color[rgb]{0,0,0}$8$}%
}}}}
\put(10592,-26){\makebox(0,0)[lb]{\smash{{\SetFigFont{14}{16.8}{\rmdefault}{\mddefault}{\itdefault}{\color[rgb]{0,0,0}$9$}%
}}}}
\put(6062,-455){\makebox(0,0)[lb]{\smash{{\SetFigFont{14}{16.8}{\rmdefault}{\mddefault}{\itdefault}{\color[rgb]{0,0,0}$i-1$}%
}}}}
\put(6302,-912){\makebox(0,0)[lb]{\smash{{\SetFigFont{14}{16.8}{\rmdefault}{\mddefault}{\itdefault}{\color[rgb]{0,0,0}$i$}%
}}}}
\put(1907,317){\makebox(0,0)[lb]{\smash{{\SetFigFont{14}{16.8}{\rmdefault}{\mddefault}{\updefault}{\color[rgb]{0,0,0}$t_{i-1,1}=4$}%
}}}}
\put(4442,-1498){\makebox(0,0)[lb]{\smash{{\SetFigFont{14}{16.8}{\rmdefault}{\mddefault}{\updefault}{\color[rgb]{0,0,0}$t_{i,1}=9$}%
}}}}
\put(883,-26){\makebox(0,0)[lb]{\smash{{\SetFigFont{14}{16.8}{\rmdefault}{\mddefault}{\itdefault}{\color[rgb]{0,0,0}$1$}%
}}}}
\put(1370,-26){\makebox(0,0)[lb]{\smash{{\SetFigFont{14}{16.8}{\rmdefault}{\mddefault}{\itdefault}{\color[rgb]{0,0,0}$2$}%
}}}}
\put(1828,-57){\makebox(0,0)[lb]{\smash{{\SetFigFont{14}{16.8}{\rmdefault}{\mddefault}{\itdefault}{\color[rgb]{0,0,0}$3$}%
}}}}
\put(2300,-26){\makebox(0,0)[lb]{\smash{{\SetFigFont{14}{16.8}{\rmdefault}{\mddefault}{\itdefault}{\color[rgb]{0,0,0}$4$}%
}}}}
\put(2788,-26){\makebox(0,0)[lb]{\smash{{\SetFigFont{14}{16.8}{\rmdefault}{\mddefault}{\itdefault}{\color[rgb]{0,0,0}$5$}%
}}}}
\put(3268,-26){\makebox(0,0)[lb]{\smash{{\SetFigFont{14}{16.8}{\rmdefault}{\mddefault}{\itdefault}{\color[rgb]{0,0,0}$6$}%
}}}}
\put(3733,-26){\makebox(0,0)[lb]{\smash{{\SetFigFont{14}{16.8}{\rmdefault}{\mddefault}{\itdefault}{\color[rgb]{0,0,0}$7$}%
}}}}
\put(4205,-26){\makebox(0,0)[lb]{\smash{{\SetFigFont{14}{16.8}{\rmdefault}{\mddefault}{\itdefault}{\color[rgb]{0,0,0}$8$}%
}}}}
\put(4678,-26){\makebox(0,0)[lb]{\smash{{\SetFigFont{14}{16.8}{\rmdefault}{\mddefault}{\itdefault}{\color[rgb]{0,0,0}$9$}%
}}}}
\put(148,-478){\makebox(0,0)[lb]{\smash{{\SetFigFont{14}{16.8}{\rmdefault}{\mddefault}{\itdefault}{\color[rgb]{0,0,0}$i-1$}%
}}}}
\put(388,-935){\makebox(0,0)[lb]{\smash{{\SetFigFont{14}{16.8}{\rmdefault}{\mddefault}{\itdefault}{\color[rgb]{0,0,0}$i$}%
}}}}
\end{picture}}
\caption{Illustrating the proof of the inequality (\ref{ineq1}) (cont.).}
\label{AMT4}
\end{figure}

\textit{Case 1. $t_{i-1,1}=t_{i,1}$.}\\

Since $t_{i,1}=t_{i-1,1}<t_{i-1,2}$, we only need to show that $t_{i-1,2}\leq t_{i,2}$. Assume otherwise that $t_{i-1,2}> t_{i,2}$, then same situation as in the previous paragraph happens with the square $B(i,t_{i,2})$: all of its upper white neighbors are matched with other black squares, so it cannot be matched upward, a contradiction (see Figure \ref{AMT3}(b) for the case $t_{i,1}=5$ and $t_{i,2}=9$; the square having black core indicates the one cannot be matched).\\

\textit{Case 2.  $t_{i-1,1}<t_{i,1}$.}\\

One can see that the dominoes containing the black squares $B(i-1,t_{i-1,1}+1),B(i-1,t_{i-1,2}+2),\dotsc,B(i-1,t_{i,1}-1)$ are all forced, and  these black squares are all matched downward (see Figures \ref{AMT4}(a) and (b) for the two possibility in the case when $t_{i-1,1}=5$ and $t_{i,1}=9$). Thus, the second match-upward square in the $(i-1)$-th black row must be on the right of $B(i-1,t_{i,1}-1)$, this means that $t_{i-1,2}\geq t_{i,1}$. Thus,  we only need to show that $t_{i-1,2}\leq t_{i,2}$. Again, assume otherwise that we have the opposite inequality $t_{i-1,2}> t_{i,2}$. In this case, the square $B(i,t_{i,2})$ cannot be matched upward, a contradiction (see the two possible cases in Figure \ref{AMT5}).
\end{proof}

\begin{figure}\centering
\setlength{\unitlength}{3947sp}%
\begingroup\makeatletter\ifx\SetFigFont\undefined%
\gdef\SetFigFont#1#2#3#4#5{%
  \reset@font\fontsize{#1}{#2pt}%
  \fontfamily{#3}\fontseries{#4}\fontshape{#5}%
  \selectfont}%
\fi\endgroup%
\resizebox{11cm}{!}{
\begin{picture}(0,0)%
\includegraphics{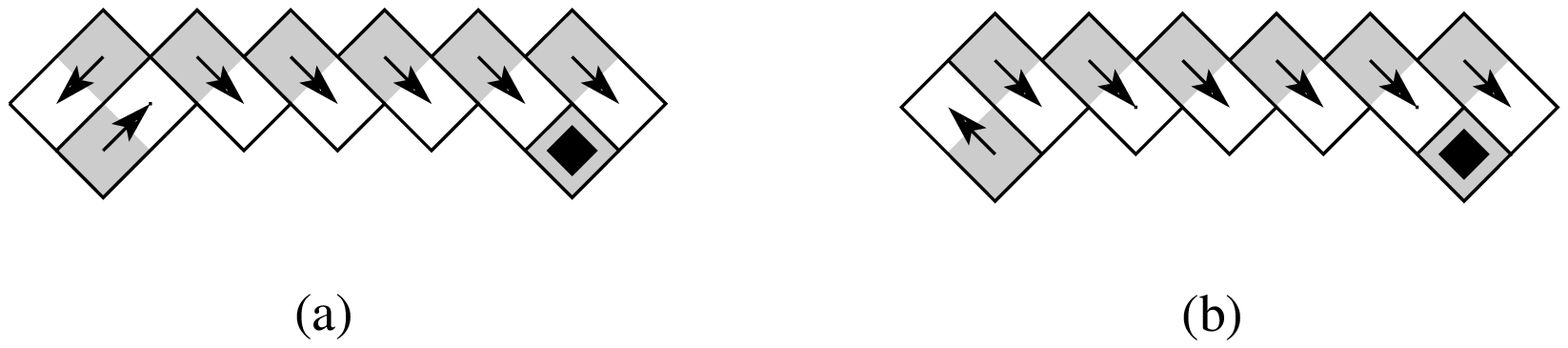}%
\end{picture}%
%

\begin{picture}(8423,2159)(346,-2762)
\put(4880,-1374){\makebox(0,0)[lb]{\smash{{\SetFigFont{14}{16.8}{\rmdefault}{\mddefault}{\itdefault}{\color[rgb]{0,0,0}$i-1$}%
}}}}
\put(1104,-2319){\makebox(0,0)[lb]{\smash{{\SetFigFont{14}{16.8}{\rmdefault}{\mddefault}{\itdefault}{\color[rgb]{0,0,0}$t_{i,1}=4$}%
}}}}
\put(3421,-2334){\makebox(0,0)[lb]{\smash{{\SetFigFont{14}{16.8}{\rmdefault}{\mddefault}{\itdefault}{\color[rgb]{0,0,0}$t_{i,2}=9$}%
}}}}
\put(1854,-834){\makebox(0,0)[lb]{\smash{{\SetFigFont{14}{16.8}{\rmdefault}{\mddefault}{\itdefault}{\color[rgb]{0,0,0}$5$}%
}}}}
\put(2334,-834){\makebox(0,0)[lb]{\smash{{\SetFigFont{14}{16.8}{\rmdefault}{\mddefault}{\itdefault}{\color[rgb]{0,0,0}$6$}%
}}}}
\put(2799,-834){\makebox(0,0)[lb]{\smash{{\SetFigFont{14}{16.8}{\rmdefault}{\mddefault}{\itdefault}{\color[rgb]{0,0,0}$7$}%
}}}}
\put(3271,-834){\makebox(0,0)[lb]{\smash{{\SetFigFont{14}{16.8}{\rmdefault}{\mddefault}{\itdefault}{\color[rgb]{0,0,0}$8$}%
}}}}
\put(3744,-834){\makebox(0,0)[lb]{\smash{{\SetFigFont{14}{16.8}{\rmdefault}{\mddefault}{\itdefault}{\color[rgb]{0,0,0}$9$}%
}}}}
\put(1854,-834){\makebox(0,0)[lb]{\smash{{\SetFigFont{14}{16.8}{\rmdefault}{\mddefault}{\itdefault}{\color[rgb]{0,0,0}$5$}%
}}}}
\put(2334,-834){\makebox(0,0)[lb]{\smash{{\SetFigFont{14}{16.8}{\rmdefault}{\mddefault}{\itdefault}{\color[rgb]{0,0,0}$6$}%
}}}}
\put(1359,-834){\makebox(0,0)[lb]{\smash{{\SetFigFont{14}{16.8}{\rmdefault}{\mddefault}{\itdefault}{\color[rgb]{0,0,0}$4$}%
}}}}
\put(361,-1374){\makebox(0,0)[lb]{\smash{{\SetFigFont{14}{16.8}{\rmdefault}{\mddefault}{\itdefault}{\color[rgb]{0,0,0}$i-1$}%
}}}}
\put(451,-1854){\makebox(0,0)[lb]{\smash{{\SetFigFont{14}{16.8}{\rmdefault}{\mddefault}{\itdefault}{\color[rgb]{0,0,0}$i$}%
}}}}
\put(6313,-834){\makebox(0,0)[lb]{\smash{{\SetFigFont{14}{16.8}{\rmdefault}{\mddefault}{\itdefault}{\color[rgb]{0,0,0}$5$}%
}}}}
\put(6793,-834){\makebox(0,0)[lb]{\smash{{\SetFigFont{14}{16.8}{\rmdefault}{\mddefault}{\itdefault}{\color[rgb]{0,0,0}$6$}%
}}}}
\put(7258,-834){\makebox(0,0)[lb]{\smash{{\SetFigFont{14}{16.8}{\rmdefault}{\mddefault}{\itdefault}{\color[rgb]{0,0,0}$7$}%
}}}}
\put(7730,-834){\makebox(0,0)[lb]{\smash{{\SetFigFont{14}{16.8}{\rmdefault}{\mddefault}{\itdefault}{\color[rgb]{0,0,0}$8$}%
}}}}
\put(8203,-834){\makebox(0,0)[lb]{\smash{{\SetFigFont{14}{16.8}{\rmdefault}{\mddefault}{\itdefault}{\color[rgb]{0,0,0}$9$}%
}}}}
\put(6313,-834){\makebox(0,0)[lb]{\smash{{\SetFigFont{14}{16.8}{\rmdefault}{\mddefault}{\itdefault}{\color[rgb]{0,0,0}$5$}%
}}}}
\put(6793,-834){\makebox(0,0)[lb]{\smash{{\SetFigFont{14}{16.8}{\rmdefault}{\mddefault}{\itdefault}{\color[rgb]{0,0,0}$6$}%
}}}}
\put(5818,-834){\makebox(0,0)[lb]{\smash{{\SetFigFont{14}{16.8}{\rmdefault}{\mddefault}{\itdefault}{\color[rgb]{0,0,0}$4$}%
}}}}
\put(5570,-2319){\makebox(0,0)[lb]{\smash{{\SetFigFont{14}{16.8}{\rmdefault}{\mddefault}{\itdefault}{\color[rgb]{0,0,0}$t_{i,1}=4$}%
}}}}
\put(7903,-2296){\makebox(0,0)[lb]{\smash{{\SetFigFont{14}{16.8}{\rmdefault}{\mddefault}{\itdefault}{\color[rgb]{0,0,0}$t_{i,2}=9$}%
}}}}
\put(5075,-1861){\makebox(0,0)[lb]{\smash{{\SetFigFont{14}{16.8}{\rmdefault}{\mddefault}{\itdefault}{\color[rgb]{0,0,0}$i$}%
}}}}
\end{picture}}
\caption{Illustrating the proof of the inequality (\ref{ineq1}) (cont.).}
\label{AMT5}
\end{figure}

By considering similarly the situations when $t_{i-1,2}=t_{i,2}$ or $t_{i-1,2}<t_{i,2}$, we obtain $t_{i,2}\leq t_{i-1,3}\leq t_{i,3}$ if $l(i)\geq 3$. Keep doing this process, we get the remaining inequalities in (\ref{ineq1}).

\medskip

We now consider the case when $i$ is even. The leftmost inequality in (\ref{ineq2}), $t_{i,1}\leq t_{i-1,1}$, follows easily from considering forced dominoes. Similar to the case of odd $i$, by considering two cases when $t_{i,1}=t_{i-1,1}$ and when $t_{i,1}<t_{i-1,1}$, we get now the inequality $t_{i-1,1}\leq t_{i,2}\leq t_{i-1,2}$. Then (\ref{ineq2}) is obtained by applying this argument repeatedly.  

In summary, $\tau=\Phi(T)$ is indeed an AMT of order $2k$. Moreover, since all black squares on the bottom of the QAR must be matched upward, the positive entries in the bottommost row of $\tau$ are $a_1,a_2,\dots,a_k$. Therefore, $\tau\in  \mathcal{A}_{2k}(a_1,a_2,\dotsc,a_k)$, and the map $\Phi$ is well defined. 

\medskip

\begin{figure}\centering
\includegraphics[width=14cm]{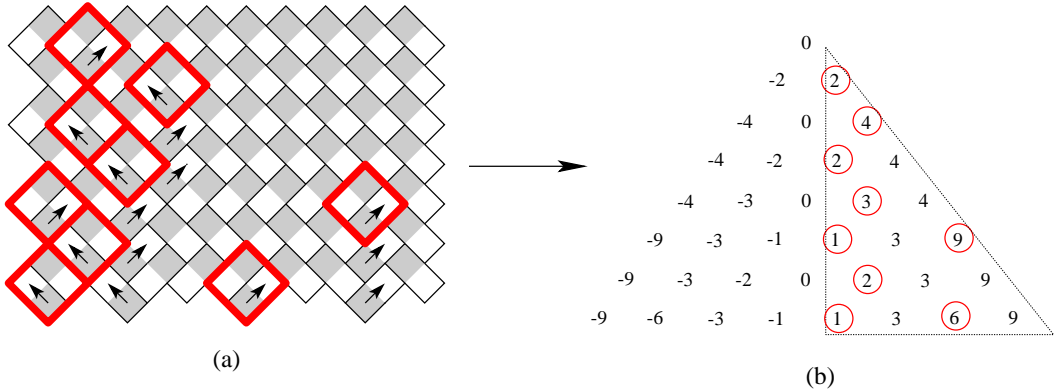}%
\caption{The $2\times 2$-blocks in the domino tilings of $RE_{7,10}(2,3,6,9)$ corresponding to the elements of $S(\tau)$ (the circled entries on the right).}
\label{AMT2b}
\end{figure}

Next, we want to compute the cardinality of the set $\Phi^{-1}(\tau)$ for any given AMT $\tau \in \mathcal{A}_{2k}(a_1,a_2,\dotsc,a_k)$, i.e. we  want to find out how many different domino tilings of $RE_{2k-1,n}(a_1,a_2,\dotsc,a_k)$ corresponding to  $\tau$. Let $S(\tau)$ be the set of positive entries of $\tau$ which do not appear in the row above (see the circled entries in Figure  \ref{AMT2b}(b)). We assign to each black square of $RE_{2k-1,n}(a_1,a_2,\dotsc,a_k)$ a label $\U$ or $\D$, so that  only the black squares $B(i,t_{i,j})$ have label $\U$, and all other ones have label $\D$. A tiling $T$ corresponds to $\tau$ if and only if all $\U$-squares are matched upward and all $\D$-squares are matched downward in $T$.

View the QAR $RE_{2k-1,n}(a_1,a_2,\dotsc,a_k)$ as the union of $2n+1$ columns of black or white squares. Imagine that we are dropping dominoes to cover successively the squares in the first black column, so that the ones with label $\U$ (resp., $\D$) are matched upward (resp., downward). One can see that all the dominoes are forced, except for those cases when a $\U$-square stays right below a $\D$-square (this correspond to a $1$ in one row of $\tau$ but not in the row above it).  At each exceptional place, we have two ways cover these two black squares by two parallel dominoes, which create a $2\times2$ block (see the $2\times 2$-blocks restricted by the bold contours in Figure \ref{AMT2b}(a)).  If we now drop dominoes to cover the squares in the second black column, all dominoes are forced, except for some $2\times2$-block corresponding to a $2$ that appears in some row of $\tau$ but not in the preceding row. Continuing in this way, one can see that the whole tiling is forced, except for certain $2\times2$-blocks corresponding to the elements of $S(\tau)$. Since each block can be covered in two ways, the number of tilings $T$ corresponding to $\tau$ is $2^{|S(\tau)|}$.  In particular, $\Phi$ is surjective; and we obtain
 \begin{align}
|\mathcal{T}(RE_{2k-1,n}(a_1,a_2,\dotsc,a_k))|&=\sum_{\tau\in\mathcal{A}_{2k}(a_1,\dots,a_k)}|\Phi^{-1}(\tau)|\\
 &=\sum_{\tau\in\mathcal{A}_{2k}(a_1,\dots,a_k)}2^{|S(\tau)|}\\
 &=A^{2}_{2k}(a_1,\dots,a_k),
 \end{align}
which implies (\ref{main1}).

\bigskip

Next, we prove the equality (\ref{main3}). Similar to (\ref{main1}),  the first equality follows from the fact that $TE_{2k,n}(a_1,a_2,\dotsc,a_{k})$ is obtained by removing forced dominoes on the top of $TE_{2k+1,n}(a_1,a_2,\dotsc,a_{k})$. Thus, we only need to show that
\[\T(TE_{2k,n}(a_1,a_2,\dotsc,a_{k}))=2^{-k}A^{2}_{2k+1}(a_1,a_2,\dotsc,a_k).\]

\begin{figure}\centering
\includegraphics[width=14cm]{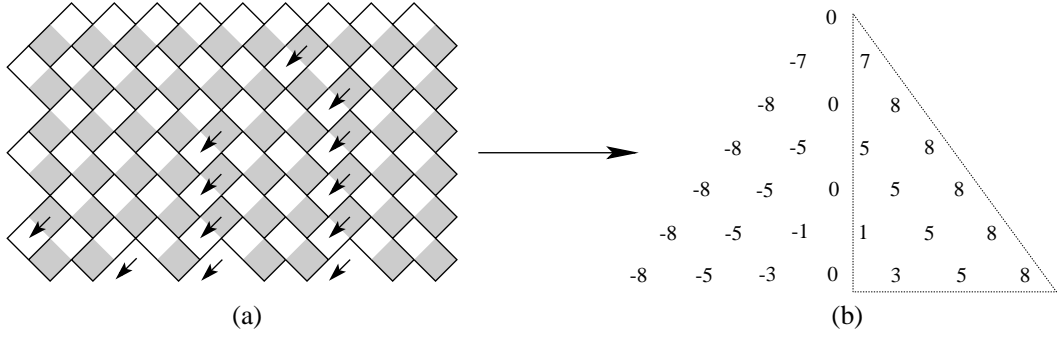}%
\caption{A domino tilings of $TE_{6,10}(3,5,8)$ and the corresponding AMT.}
\label{AMT6}
\end{figure}

We construct a map
\[\Psi:\mathcal{T}(TE_{2k,n}(a_1,a_2,\dotsc,a_k))\rightarrow\mathcal{A}_{2k+1}(a_1,a_2,\dotsc,a_k)\]
 similar to $\Phi$ in the proof of (\ref{main1}), the only difference is that  the $i$-th row of the AMT $\Psi(T)$ records the positions of the squares on the $i$-th black row, which are \textit{not} matched upward (\textit{including the squares removed on the bottom of  $TE_{2k,n}(a_1,a_2,\dotsc,a_k)$}). Figure \ref{AMT6} illustrates the map $\Psi$ for $k=3$, $n=10$, $a_1=3$, $a_2=5$, and $a_3=8$. Similar to the equality (\ref{main1}), one can verify that $\Psi(T)$ is indeed an AMT in $\mathcal{A}_{2k+1}(a_1,a_2,\dotsc,a_k)$, i.e. $\Psi$ is well defined.

Similarly, given an AMT $\tau\in\mathcal{A}_{2k+1}(a_1,a_2,\dotsc,a_k)$. By doing the same domino-dropping process, we get that there are now $2^{|V(\tau)|}$ tilings of $TE_{2k,n}(a_1,a_2,\dotsc,a_{k})$ corresponding to $\tau$, where $V(\tau)$ is the set of positive entries of $\tau$, which do not appear in the row \textit{below}. It means that $|\Psi^{-1}(\tau)|=2^{|V(\tau)|}$, and $\Psi$ is surjective.

We now compare the cardinalities of two sets $S:=S(\tau)$ and $V:=V(\tau)$. Denote by $r_i$ the $i$-th row of $\tau$, and $V_i:=V(\tau)\cap r_i$ and $S_i:=S(\tau)\cap r_i$.  Partition $\tau$ into two sets $\tau_{E}$, the set of all entries on the even rows, and $\tau_{O}$, the set of all entries on the odd rows. We will compare the numbers of elements of $V$ and $S$ in each of these subsets. 

Remove the positive entries, which appear in both rows $r_{2i-1}$ and $r_{2i}$. By the definition, the remaining positive entries  on $r_{2i-1}$ are in $V_{2i-1}$ and the remaining positive ones on $r_{2i}$ are in $S_{2i}$. Since $r_{2i}$ has one positive entry more than  $r_{2i-1}$ (since each row $r_j$ has $l(j)$ positive entries, and $l(2i)=i=l(2i-1)+1$), we have $|S_{2i}|=|V_{2i-1}|+1$, for $i=1,2,\dots,k$. Adding up all latter equalities for $i=1,2,\dots,k$, we have $|S\cap\tau_E|=|V\cap\tau_O|+k$. Similarly, we have $|V_{2i}|=|S_{2i+1}|$, for $i=1,2,\dotsc,k$, so $|S\cap \tau_{O}|=|V\cap \tau_{E}|$. This implies that 
\begin{equation}\label{relate}
|V|=|V\cap\tau_{E}|+|V\cap \tau_{O}|=|S\cap \tau_{O}|+|S\cap \tau_{E}|-k=|S|-k.
\end{equation}

By (\ref{relate}), we have
 \begin{align}
|\mathcal{T}(TE_{2k,n}(a_1,a_2,\dotsc,a_k))|&=\sum_{\tau\in\mathcal{A}_{2k+1}(a_1,\dots,a_k)}|\Psi^{-1}(\tau)|\\
 &=\sum_{\tau\in\mathcal{A}_{2k+1}(a_1,\dots,a_k)}2^{|S(\tau)|-k}\\
 &=2^{-k}A^{2}_{2k+1}(a_1,\dots,a_k).
 \end{align}
Then  (\ref{main3}) follows, and this finishes our proof.

\end{document}